\documentclass[a4paper, 11pt]{article}

\usepackage{amsmath}
\usepackage{amsfonts}
\usepackage{amssymb}
\usepackage[english]{babel}
\usepackage{graphicx}
\usepackage{amsthm}
\usepackage{mathrsfs}
\usepackage{pgf,tikz}
\usepackage{ulem}
\usepackage{amstext} 
\usepackage{array}   
\newcolumntype{C}{>{$}c<{$}} 
\definecolor{uququq}{rgb}{0.25,0.25,0.25}
\newtheorem{thm}{Theorem}[section]
\newtheorem{cor}[thm]{Corollary}
\newtheorem{lem}[thm]{Lemma}

\usetikzlibrary{arrows}
\theoremstyle{definition}

\theoremstyle{definition}

\theoremstyle{definition}
\newtheorem{ex}[thm]{Example}

\newcommand{\Z}{\mathbb{Z}}

\newcommand{\F}{\mathbb{F}}

\newcommand{\comment}[1]{}
\numberwithin{equation}{section}

\begin{document}
\date{}
\title{
Additivity of symmetric and subspace designs}
\author{Marco Buratti \thanks{Dipartimento di Scienze di Base e Applicate per l’Ingegneria (S.B.A.I.), Sapienza
Universit\`a di Roma, Via Antonio Scarpa, 10, Italy, email: marco.buratti@uniroma1.it}\\
\\ 
Anamari Naki\'c \thanks{Faculty of Electrical Engineering and Computing,
University of Zagreb, Croatia, email: anamari.nakic@fer.hr}}
\date{\today}
\maketitle
\begin{abstract}
A $2$-$(v,k,\lambda)$ design is additive (or strongly additive) if it is possible to embed it in a suitable abelian group $G$ in such a way that 
its block set is contained in (or coincides with) the set of all the zero-sum $k$-subsets of $G$. 
Explicit results on the additivity or strong additivity of symmetric designs and subspace 2-designs are presented.
In particular, the strong additivity of PG$_d(n,q)$, which was known to be additive only for $q=2$ or $d=n-1$, is always established.



\end{abstract}

\small\noindent {\textbf{Keywords:}} additive design; strongly additive design; symmetric design; difference set; projective geometry;
subspace design.

\smallskip
\noindent {\textbf{2010 MSC:}}
05B05, 05B10, 05B25.

\section{Introduction}
We assume familiarity with the very basic notions of design theory and finite geometry. 
For the relevant background, we refer to \cite{BJL} and to some
chapters \cite{IT,DPS,Storme} of the Handbook of Combinatorial Designs.

A 
design ${\mathscr D}=(V,{\mathscr B})$ is {\it additive} under an abelian group $G$, or briefly $G$-additive,
if there exists an injective map $f$ from $V$ to $G$ such that $f(B)$ is zero-sum for every $B\in{\mathscr B}$.
Such a map $f$ will be called an {\it embedding} of $\mathscr D$ in $G$.
In the very special case that $f$ maps $\mathscr B$ precisely onto the set of all zero-sum $k$-subsets of $G$,
the design $\mathscr D$ is said to be {\it strongly $G$-additive} and the map $f$ is said to be a {\it strong embedding} of $\mathscr D$ in $G$.
In order to underline that an embedding $f$ is not strong we say that $f$ is a {\it smooth} embedding.



By simply saying that ${\mathscr D}$ is additive or strongly additive one means that ${\mathscr D}$
is $G$-additive or strongly $G$-additive for at least one group $G$.
In general, to determine whether a design $\mathscr D$ is additive seems to be difficult. 
To establish if it is strongly additive is even more difficult.
In the affirmative cases, it would be interesting to determine the smallest $G$'s
in which $\mathscr D$ can be smoothly or strongly embeddable, respectively.
Obviously, the smallest possible order for $G$ is the number of points.
This is why we say that a design $\mathscr D$ with $v$ points is {\it strictly} or {\it almost strictly} $G$-additive
when it is additive under a group $G$ of order $v$ or $v+1$, respectively.

In a recent paper \cite{Pavone} it is shown that there are designs which are additive but not strongly additive. 

The theory of additive designs was recently introduced by Caggegi, Falcone and Pavone \cite{CFP}. Other papers
from these authors on the topic or related topics are \cite{C,CF,CFP2,FP}. 
Besides their intrinsic beauty, additive designs have interesting connections with several branches of
discrete mathematics such as coding theory and additive combinatorics. 
We also note that the usage of zero-sum blocks in the construction of designs is frequent (see, e.g., \cite{BCHW, BH, EW, K, WW}),
and that there are combinatorial designs different from the classic ones
which can be considered {\it additive} as, for instance, the so-called {\it Heffter arrays} \cite{PD}.

In their seminal paper Caggegi, Falcone and Pavone proved that 
every $(v,k,\lambda)$ symmetric design is strongly additive but they were able to indicate a concrete group where it is strongly 
embeddable, that is $\Z_{k-\lambda}^{(v-1)/2}$, only in the hypothesis that the order $k-\lambda$ is a prime not dividing $k$.
Here we prove that \underline{every} symmetric $(v,k,\lambda)$ design is strongly additive
under the (huge) group $\Z_{k-\lambda}^v$ without any conditions on the parameters. 
We also prove that a cyclic symmetric design is smoothly additive under a relatively small group  
if suitable arithmetic conditions are met. 

In order to explain our results on the ``geometrical side", we need to give some notation and terminology.
Given a prime power $q$, we denote by $\F_q$, EA$(q)$ and $\F_q^*$ the finite field of order $q$,
its additive group (that is the elementary abelian group of order $q$) and its multiplicative group,
respectively. Also, AG$(n,q)$ and PG$(n,q)$ will denote, respectively, the $n$-dimensional affine and projective geometries
over $\F_q$. 
The classical designs of points and $d$-dimensional 
subspaces of AG$(n,q)$ and PG$(n,q)$ will be denoted by AG$_d(n,q)$ and PG$_d(n,q)$, respectively. 
A {\it $2$-$(n,k,\lambda)_q$ {\it subspace design} -- also called a $2$-$(n,k,\lambda)$ design over $\F_q$} or 
a {\it $q$-analog of a $2$-$(n,k,\lambda)$ design} -- is a classic 2-design of parameters $({q^n-1\over q-1},{q^k-1\over q-1},\lambda)$ design whose points are
those of PG$(n-1,q)$ and whose blocks are suitable $(k-1)$-dimensional subspaces of PG$(n-1,q)$.
In particular, PG$_d(n,q)$ is a 2-$(n+1,d+1,\lambda)_q$ design where $\lambda$ is the Gaussian coefficient ${n-1 \brack d-1}_q$;
we may call it the {\it complete} $n$-dimensional $d$-subspace design.
For general background on subspace designs we refer to \cite{BKW}.

Note 
that any coset of a subgroup of EA$(q^n)$ of order $q^d>2$ is zero-sum.
Hence the designs which are most obviously additive are AG$_d(n,q)$ with $(d,q)\neq(1,2)$ and the designs 
 over $\F_2$.
Indeed the blocks of AG$_d(n,q)$ are all the cosets of all the subgroups of $EA({q^n})$ of order $q^d$, and the blocks of
a 2-$(v,k,\lambda)_2$ design are suitable subgroups of $EA(2^v)$ of order $2^k$ deprived of the identity element.
Thus, besides the additivity of PG$_d(n,2)$, we have the existence of  an additive 2-$(v,7,7)$ design for every odd $v$ 
in view of the main results in \cite{BN1,Thomas}, and of an additive 2-(8191,7,1) design, that is the well-celebrated 
2-$(13,3,1)_2$ design found in $\cite{BEOVW}$ (see also Section 6.1 in \cite{BNW}). 
It is also evident the strict additivity of every 2-$(q^n,kq,\lambda)$ design whose blocks are union of $k$ parallel lines of AG$(n,q)$
(see \cite{BN3,CF,N} for some examples).
In all these cases the embedding map is simply the identity.

For $q>2$ the additivity of PG$_d(n,q)$ and, more generally, of a 2-design over $\F_q$ is not immediate.
One can deduce it only for PG$_{n-1}(n,q)$ since this design -- that of points and hyperplanes of PG$(n,q)$ --
is symmetric. 
In this paper we will prove that PG$_d(n,q)$ is always strongly additive under a huge group
and smoothly additive under the small EA$(q^{n+1})$. More generally, all subspace 2-designs 
are additive. Unfortunately, their strong additivity remains in doubt.


Our four main results, one for each of the subsequent sections, can be summarized as follows.
\begin{itemize}
\item[(1)] Every symmetric $(v,k,\lambda)$ design is  strongly $\Z_{k-\lambda}^v$-additive.
\item[(2)] A cyclic symmetric $(v,k,\lambda)$ design is smoothly $\Z_p^{ord_v(p)}$-additive for any prime $p$ dividing $k$ but not $v$.
\item[(3)] PG$_d(n,q)$ is strongly $\Z_{q^{d}}^{(q^{n+1}-1)/(q-1)}$-additive.
\item[(4)] Any $2$-$(n,k,\lambda)_q$ design is smoothly EA$(q^{n})$-additive.
\end{itemize}



Applying (3) with $d=1$ we get, in particular, that PG$_1(n,q)$ is an additive Steiner 2-design.
In \cite{BN2} we recently proved that for any $k$ which is neither singly even nor of 
the form $2^n3$ there are infinitely many -- unfortunately huge -- values of $v$ for which there exists a strictly additive 2-$(v,k,1)$ design.
So the big challenge is to determine additive 2-$(v,k,1)$ designs with $v$ ``reasonable" where
$k$ is neither a prime power nor a prime power plus one. Indeed for $k$ a prime power we have 
AG$_1(n,k)$ whereas for $k$ a prime power plus one we have PG$_1(n,k)$.

\smallskip
It is important to note that the additive designs by Caggegi et al. bear no known relation to the
“additive BIB designs” considered in \cite{Japan}, in spite of the misfortune of inadvertently similar terminology.

\section{Strong additivity of the symmetric designs}
 
 We recall that a $(v,k,\lambda)$ {\it symmetric} design is a $2$-$(v,k,\lambda)$ design with as many points as blocks. 
 As a consequence, the number of blocks through a point is equal to $k$ and any two distinct blocks share exactly $\lambda$ points. 
 The trivial necessary condition for its existence is that $\lambda(v-1)=k(k-1)$. Another necessary condition is
 given by the famous theorem of Bruck, Ryser and Chowla \cite{BJL,IT}.
 
In this section we get the strong additivity of any symmetric design already obtained by Caggegi et al. in \cite{CFP}.
Anyway, differently from them, we are always able to indicate a group under which the
strong additivity is realizable.

 We first need the following lemma.
 \begin{lem}\label{doublecounting}
 If $X$ is a $k$-subset of a $(v,k,\lambda)$ symmetric design ${\mathscr D}=(V,{\mathscr B})$, then there exists a block $B\in{\mathscr B}$
 intersecting $X$ in at least $\lambda+1$ points.
 \end{lem}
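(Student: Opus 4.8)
The plan is to run a double-counting argument on the pairs of points of $X$ that are simultaneously covered by a block. For each block $B\in{\mathscr B}$ write $m_B=|B\cap X|$; our goal is to show that $m_B\ge\lambda+1$ for at least one $B$, so I will argue by contradiction, assuming $m_B\le\lambda$ for \emph{every} block and deriving an impossibility. The two facts I would lean on are exactly the defining properties of a symmetric design: every point lies in $r=k$ blocks, and every pair of distinct points lies in $\lambda$ blocks.

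The first step is to record two summation identities obtained by counting incidences with $X$. Counting pairs $(x,B)$ with $x\in X$, $x\in B$ by points first gives $\sum_{B\in{\mathscr B}}m_B=k|X|=k^2$, using that the replication number of a symmetric design equals $k$. Counting pairs $(\{x,y\},B)$ with $\{x,y\}\subseteq X\cap B$ by point-pairs first gives $\sum_{B\in{\mathscr B}}\binom{m_B}{2}=\lambda\binom{k}{2}=\tfrac12\lambda k(k-1)$, since each of the $\binom{k}{2}$ pairs inside $X$ lies in exactly $\lambda$ blocks. Expanding $\binom{m_B}{2}=\tfrac12(m_B^2-m_B)$ and substituting the first identity, I would then solve for the second moment to obtain $\sum_{B\in{\mathscr B}}m_B^2=\lambda k(k-1)+k^2$.

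Now comes the contradiction. If $0\le m_B\le\lambda$ held for all $B$, then $m_B^2\le\lambda\, m_B$ for each block, and summing would give $\sum_B m_B^2\le\lambda\sum_B m_B=\lambda k^2$. Comparing this with the exact value above forces $\lambda k(k-1)+k^2\le\lambda k^2$, which after dividing by $k>0$ simplifies to $k\le\lambda$. This contradicts the fact that $\lambda<k$ in any nontrivial symmetric design, which follows from $\lambda(v-1)=k(k-1)$ together with $v>k$, giving $\lambda=k(k-1)/(v-1)\le k-1<k$. Hence some block must satisfy $m_B\ge\lambda+1$, as required.

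I expect the argument itself to be short and essentially forced; the only point demanding care is the very last inequality $\lambda<k$. The bookkeeping with $r=k$ (specific to symmetric designs, and the reason the statement need not hold for a general $2$-design with the same $X$) and the pointwise estimate $m_B^2\le\lambda m_B$ are routine, so the ``obstacle'' is really just making sure the strict separation $\lambda<k$ is available to convert $k\le\lambda$ into a genuine contradiction rather than an equality; this is guaranteed by the nontriviality hypothesis implicit in a $(v,k,\lambda)$ symmetric design.
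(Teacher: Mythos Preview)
Your argument is correct, but it does a little more work than the paper's. The paper uses only the first-moment identity $\sum_{B}m_B=k^2$ together with the fact that a symmetric design has exactly $v$ blocks: assuming $m_B\le\lambda$ for all $B$ gives $k^2\le\lambda v$, and then the relation $\lambda(v-1)=k(k-1)$ (i.e.\ $\lambda v=k^2-k+\lambda$) forces $k\le\lambda$, the desired contradiction. Your route instead computes the second moment $\sum_B m_B^2=\lambda k(k-1)+k^2$ via the pair-count and then applies the pointwise bound $m_B^2\le\lambda m_B$; this lands at the very same inequality $k\le\lambda$. So the second-moment step is not needed here, although it is a perfectly valid substitute: what it buys you is that you never have to invoke the block count $|{\mathscr B}|=v$ explicitly, only the replication number $r=k$ and the index $\lambda$.
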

 \begin{proof}
Consider the set $\Phi(X):=\{(x,B) \ | \ x\in X; x\in B\in {\mathscr B}\}$ of all {\it flags} of $\mathscr D$ having the point in $X$.
 The number of pairs $(\overline x,B)$ belonging to $\Phi(X)$ with $\overline x$ fixed, is equal to the number of blocks of $\mathscr D$
 containing $\overline x$, that is $k$. Thus we have $|\Phi(X)|=k^2$. The number of pairs $(x,\overline B)$ belonging to $\Phi(X)$ 
 with $\overline B$ fixed, is clearly equal to $|\overline B\cap X|$. Thus we have $|\Phi(X)|=\sum_{B\in{\mathscr B}}|B\cap X|$.
 Comparing the obtained equalities we get
 $$k^2=\sum_{B\in{\mathscr B}}|B\cap X|$$
 Assume for contradiction that $|B\cap X|\leq\lambda$ for every $B\in{\mathscr B}$. In this case the above equality
 would give $k^2\leq \lambda v$. This, together with the trivial identity $\lambda(v-1)=k^2-k$, would imply
 that $\lambda\geq k$ which is absurd.
 \end{proof}
 
 \begin{thm}\label{strongsymmetric}
 A symmetric $(v,k,\lambda)$ design is strongly additive under $\Z_{k-\lambda}^v$.
 \end{thm}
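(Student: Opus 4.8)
The plan is to exhibit an explicit strong embedding read directly off the incidence structure. Label the blocks $B_1,\dots,B_v$ and use them to index the $v$ coordinates of $G=\Z_{k-\lambda}^v$. For a point $p\in V$ let $\chi_p\in\{0,1\}^v$ be its block-incidence vector, so that the $i$-th entry of $\chi_p$ is $1$ exactly when $p\in B_i$, and define the embedding by $f(p)=\chi_p-\mathbf 1$ reduced coordinatewise modulo $k-\lambda$ (equivalently, $f(p)$ carries a $0$ in position $i$ when $p\in B_i$ and a $-1$ otherwise). Injectivity of $f$ is immediate: the incidence matrix of a symmetric design is nonsingular (its determinant is $\pm k(k-\lambda)^{(v-1)/2}\neq 0$), so the point incidence vectors $\chi_p$ are pairwise distinct, hence so are the $f(p)$.

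The whole argument then rests on one clean computation. For an arbitrary $k$-subset $X\subseteq V$, the $i$-th coordinate of $\sum_{p\in X}f(p)$ equals $|X\cap B_i|-k$. Therefore $f(X)$ is zero-sum in $G$ if and only if $|X\cap B_i|\equiv k\pmod{k-\lambda}$ for every $i$, and since $k-\lambda\equiv 0$ this is the same as $|X\cap B_i|\equiv\lambda\pmod{k-\lambda}$ for every block $B_i$. The forward inclusion (every block is zero-sum, so $\mathscr D$ is $G$-additive) drops out at once: taking $X=B_j$ we have $|B_j\cap B_i|=\lambda$ for $i\neq j$ and $|B_j\cap B_j|=k\equiv\lambda$, so the congruence holds for all $i$.

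The delicate part, which upgrades additivity to strong additivity, is the converse: I must show that any $k$-subset $X$ with $f(X)$ zero-sum is in fact a block. Here Lemma \ref{doublecounting} does the essential work by producing a block $B_j$ with $|X\cap B_j|\ge\lambda+1$. Combining this with the congruence $|X\cap B_j|\equiv\lambda\pmod{k-\lambda}$ and the trivial bound $|X\cap B_j|\le|X|=k$, the only admissible value in the interval $[\lambda+1,k]$ is $k$ itself: the next smaller candidate $\lambda$ is excluded, and the next larger candidate $2k-\lambda$ exceeds $k$ because $k>\lambda$. Hence $|X\cap B_j|=k$, and since $|X|=|B_j|=k$ this forces $X=B_j$. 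Thus the zero-sum $k$-subsets of $f(V)$ are exactly the images of blocks, which is strong additivity.

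I expect the main obstacle to be precisely this congruence-plus-bound step, and it also explains why the lemma is indispensable: without the guarantee that some block meets $X$ in more than $\lambda$ points, a zero-sum $k$-set could a priori meet every block in exactly $\lambda$ points and escape being a block. The lemma rules this out, and the modulus being exactly the order $k-\lambda$ is what makes $k$ the unique feasible intersection number once we pass $\lambda$. (The degenerate case $k-\lambda=1$, where $G$ is trivial and no injection can exist for $v>1$, corresponds only to the complete symmetric design $v=k+1$ and should be set aside separately.)
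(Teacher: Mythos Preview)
Your proof is correct and follows essentially the same route as the paper's: the embedding is the negative of the paper's row map (they use $1$ where you use $-1$ for non-incidence), and the converse is handled identically by invoking Lemma~\ref{doublecounting} and then squeezing the congruence $|X\cap B_j|\equiv\lambda\pmod{k-\lambda}$ against the bounds $\lambda+1\le|X\cap B_j|\le k$. Minor differences are that you justify injectivity via nonsingularity of the incidence matrix rather than the elementary observation that some block separates any two points, and that you flag the degenerate case $k-\lambda=1$, which the paper tacitly excludes.
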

 \begin{proof}
 Given a symmetric $(v,k,\lambda)$ design ${\mathscr D}=(V,{\mathscr B})$,
 take an ordering $\{x_1,\dots,x_v\}$ of $V$ and an ordering $\{\beta_1,\dots,\beta_v\}$ of $\mathscr B$.
Consider the $v\times v$ matrix $M=(m_{i,j})$  with entries in $\Z_{k-\lambda}$ defined by
$$m_{i,j}=\begin{cases}
0 & {\rm if} \ x_i\in\beta_j\cr
1 & {\rm if} \ x_i\not\in \beta_j
 \end{cases}$$
and consider the map
$$f: x_i \in V \longrightarrow M_i\in \Z_{k-\lambda}^v$$
where $M_i$ denotes the $i$-th row of $M$.
For any two distinct points $x_{i_1}$, $x_{i_2}$,
there is a block $\beta_j$ containing $x_{i_1}$ but not $x_{i_2}$ since the number of blocks 
through $x_{i_1}$, that is $k$, is strictly greater than the 
number of blocks containing both $x_{i_1}$ and $x_{i_2}$, that is $\lambda$.
It follows that the $j$-th component of $M_{i_1}$ is 0 whereas the $j$-th component of $M_{i_2}$ is 1
and then $f(x_{i_1})\neq f(x_{i_2})$. Thus $f$ is injective. By definition of strong additivity,
it is enough to prove that $\beta$ is a block of $\mathscr D$ if and only if $\beta$
is a $k$-subset of $V$ such that $f(\beta)$ is zero-sum.

Let $\beta=\{x_{i_1},x_{i_2},\dots,x_{i_k}\}$ be a fixed block of $\mathscr D$.
Given any block $\beta_j$ we have either $\beta=\beta_j$ or $|\beta_j \ \cap \ \beta|=\lambda$.
In the first case we have $m_{i_h,j}=0$ for $1\leq h\leq k$. In the second case
we have $m_{i_h,j}=1$ for exactly $k-\lambda$ values of $h\in\{1,\dots,k\}$, that are
the values of $h$ for which $x_{i_h}\in\beta\setminus\beta_j$.
In both cases we clearly have $m_{i_1,j}+m_{i_2,j}+ \ldots +m_{i_k,j}=0$.
This is true for any $j$, hence $M_{i_1}+M_{i_2}+\dots+M_{i_k}=0$, i.e., $f(\beta)$ is zero-sum.

Now assume that $\beta=\{x_{i_1},\dots,x_{i_k}\}$ is a $k$-subset of $V$
such that $f(\beta)$ is zero-sum. Thus we have $M_{i_1}+M_{i_2}+\dots+M_{i_k}=0$ and then
\begin{equation}\label{proiezione2}
m_{i_1,j}+m_{i_2,j}+\ldots+m_{i_k,j}=0\quad\mbox{in $\Z_{k-\lambda}$ for $1\leq j\leq v$}
\end{equation}
By Lemma \ref{doublecounting} there is a block $\beta_j$ intersecting $\beta$ in at least $\lambda+1$ points.
Thus, up to a reordering of the indices, we can assume that $\{x_{i_1},x_{i_2},\dots,x_{i_{\lambda+1}}\}$ is contained in $\beta_j$
so that we have $m_{i_h,j}=0$ for $1\leq h\leq \lambda+1$.
It follows that $m_{i_h,j}=0$ also for $\lambda+2\leq h\leq k$ 
in view of (\ref{proiezione2}) and the fact that the entries of $M$ are only $0$'s and $1$'s. Hence every point of $\beta$ is in $\beta_j$.
This means that $\beta=\beta_j$ and the assertion follows.
 \end{proof}

 \section{Smooth additivity of cyclic symmetric designs}
 
 An incidence structure $(V,{\mathscr B})$  is said to be {\it cyclic} if there exists a cyclic permutation on $V$ leaving $\mathscr B$ invariant.
It is very well-known that, up to isomorphism, every cyclic symmetric $(v,k,\lambda)$ design is 
of the form $(\Z_v,\{D+i \ | \ 0\leq i\leq v-1\})$ where $D$ is a cyclic $(v,k,\lambda)$ {\it difference set}. 
This means that $D$ is a $k$-subset of $\Z_v$ such that its {\it list of differences}
$\Delta D=\{d-d' \ | \ d,d'\in D;d\neq d'\}$ is exactly $\lambda$ times $\Z_v\setminus\{0\}$. 

From the previous section we already know that every symmetric $(v,k,\lambda)$ design $\mathscr D$ is additive under $\Z_{k-\lambda}^v$.
It is clear that the order of this group is in general huge. Here we prove that if $\mathscr D$ is cyclic and the radical of
$k-\lambda$ does not divide $v$, then $\mathscr D$ is additive under a group of much smaller order.

In the following, assuming that $\gcd(u,v)=1$, we denote by $ord_v(u)$ the multiplicative order of $u$ modulo $v$, i.e., the order of 
$u$ in the group of units of $\Z_v$.

\begin{thm}\label{wow}
Let $\mathscr D$ be a cyclic symmetric $(v,k,\lambda)$ design and let $p$ be a prime dividing $k-\lambda$ but not $v$. 
Then $\mathscr D$ is $EA({p^t})$-additive with $t=ord_v(p)$.
\end{thm}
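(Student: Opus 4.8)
The plan is to realize EA$(p^t)$ as the additive group of the field $\F_{p^t}$ and to build the embedding from a root of unity. Since $p\nmid v$ and $t=ord_v(p)$, by definition $t$ is the least positive integer with $v\mid p^t-1$; hence the cyclic group $\F_{p^t}^*$, of order $p^t-1$, contains an element $\zeta$ of multiplicative order exactly $v$. Writing the design, up to isomorphism, as $(\Z_v,\{D+i \mid 0\le i\le v-1\})$ for a cyclic $(v,k,\lambda)$ difference set $D$, I would propose the candidate map $f:\Z_v\to\F_{p^t}$ given by $f(i)=\zeta^i$. It is injective precisely because $\zeta$ has order $v$, so the $v$ powers $\zeta^0,\dots,\zeta^{v-1}$ are pairwise distinct.

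Next I would reduce the zero-sum condition to a single identity. For the block $D+j$ the image sum is $\sum_{d\in D}\zeta^{d+j}=\zeta^j S$, where $S:=\sum_{d\in D}\zeta^d$. Thus every block is zero-sum as soon as $S=0$, and the whole problem collapses to showing that $S$ vanishes. To attack this I would multiply $S$ by $S':=\sum_{d\in D}\zeta^{-d}$ and expand: $S\,S'=\sum_{d,d'\in D}\zeta^{d-d'}=k+\sum_{\delta\in\Delta D}\zeta^{\delta}$. Here the difference-set hypothesis enters: since $\Delta D$ is $\lambda$ copies of $\Z_v\setminus\{0\}$ and $\sum_{a\in\Z_v}\zeta^{a}=0$ (the $v$-th roots of unity sum to zero because $\zeta\ne 1$), the off-diagonal part equals $\lambda(0-1)=-\lambda$. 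Hence $S\,S'=k-\lambda$.

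Now the characteristic does the work. Because $\F_{p^t}$ has characteristic $p$ and $p\mid k-\lambda$, the scalar $k-\lambda$ is zero in $\F_{p^t}$, so $S\,S'=0$. As $\F_{p^t}$ is a field, this forces $S=0$ or $S'=0$. If $S=0$ the map $f(i)=\zeta^i$ is the desired embedding; if instead $S'=0$, the same computation applied to the order-$v$ element $\zeta^{-1}$ shows that $f(i)=\zeta^{-i}$ is a valid embedding. Either way $\mathscr D$ is EA$(p^t)$-additive.

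I expect the decisive step to be the product computation $S\,S'=k-\lambda$, which is where the difference-set structure is genuinely used; once this is in place, the passage to characteristic $p$ and the field (integral-domain) property finish the argument almost immediately. The only point requiring minor care is that the vanishing of the product yields $S=0$ \emph{or} $S'=0$ rather than $S=0$ outright, which is handled harmlessly by the freedom to replace $\zeta$ with $\zeta^{-1}$.
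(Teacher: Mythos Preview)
Your proof is correct and follows essentially the same route as the paper: embed via $i\mapsto \zeta^i$ (the paper's $g^x$), compute the product $S\,S'=k-\lambda$ using the difference-set identity and the vanishing of the full sum of $v$-th roots of unity, and then use the integral-domain property in characteristic $p$ to conclude that one of $S$, $S'$ is zero. The only cosmetic difference is notational---the paper writes the two candidate maps $f_1,f_{-1}$ up front rather than passing from $\zeta$ to $\zeta^{-1}$ at the end.
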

\begin{proof}
By definition of $t$, we have $p^t\equiv1$ (mod $v$) so that $v$ is a divisor of the order $p^t-1$ of $\F_{p^t}^*$.
Let $g$ be a generator of the subgroup of $\F_{p^t}^*$ of order $v$
and consider the injective maps $f_1$ and $f_{-1}$ defined as follows:
$$f_1: x\in \Z_v \longrightarrow g^{x} \in \F_{p^t},\quad\quad\quad f_{-1}: x\in \Z_v \longrightarrow g^{-x} \in \F_{p^t}.$$
The assertion will be proved if we show that there is at least one $i\in\{1,-1\}$ such that
\begin{equation}\label{sufficient}
\sum_{b\in B}f_i(b)=0\quad\quad\mbox{for every block $B$ of $\mathscr D$}
\end{equation}
By assumption, the blocks of $\mathscr D$ are all the translates of a $(v,k,\lambda)$ difference set $D$. 
Consider the two sums  $$\displaystyle\sigma_1:=\sum_{d\in D}f_1(d)=\sum_{d\in D}g^d,\quad\quad\quad \displaystyle\sigma_{-1}:=\sum_{d\in D}f_{-1}(d)=\sum_{d\in D}g^{-d}$$
and let us calculate their product:
$$\sigma_1\cdot\sigma_{-1}=\sum_{d\in D}g^{d-d}+\sum_{d\in D} \ \sum_{d'\in D\setminus\{d\}}g^{d-d'}=$$

$$=|D|+\sum_{\delta\in\Delta D}g^\delta=k+\lambda\cdot(\sum_{i=1}^{v-1}g^i)=$$

$$=(k-\lambda)+\lambda\cdot(\sum_{i=0}^{v-1}g^i)=k-\lambda=0.$$
The penultimate equality holds since the order of $g$ in $\F_{p^t}^*$ is $v$ so that $g^v=1$ and then $\displaystyle\sum_{i=0}^{v-1}g^i={g^v-1\over g-1}=0$.
The last equality holds since by assumption $k-\lambda$ is divisible by $p$ and hence $k-\lambda$ is null in $\F_{p^t}$.

From the obtained equality $\sigma_1\cdot\sigma_{-1}=0$ we infer that one of the two sums $\displaystyle\sigma_1$ and $\displaystyle\sigma_{-1}$ is equal to 0,
i.e., there is one element $i\in\{1,-1\}$ such that $\sigma_i=0$. 
Take this element $i$, take any block $B=D+j$ of $\mathscr D$, and note that the sum of the elements
of $f_i(B)$ is $\displaystyle\sum_{d\in D}g^{i(d+j)}=g^{ij}\cdot\displaystyle\sum_{d\in D}g^{id}=g^{ij}\cdot\sigma_i=0$. Hence (\ref{sufficient}) holds and the assertion follows.
\end{proof}

The above theorem gives many concrete results when $k-\lambda$ is coprime with $v$.
Anyway we may have applications of the theorem also for $\gcd(k-\lambda,v)\neq1$.
For instance, according to \cite{Gordon} the existence of cyclic difference sets of parameters $(465,145,45)$ 
and $(910,405,180)$ is still open.
If they exist, using Theorem \ref{wow} we can say that their associated symmetric designs would be $\Z_2^{20}$-additive and
$\Z_{3}^{12}$-additive,  respectively.
Indeed in the first case 2 is a divisor of $k-\lambda=100$ which does not divide $v=465$
and we have $ord_{465}(2)=20$. In the second case 3 is a divisor of $k-\lambda=225$ which does not divide $v=910$
and we have $ord_{910}(3)=12$. 

We recall that the incidence structure points-hyperplanes of PG$(n,q)$ is a cyclic symmetric design
generated by the so-called {\it Singer} $({q^{n+1}-1\over q-1},{q^{n}-1\over q-1},{q^{n-1}-1\over q-1})$ difference set.
Here $k-\lambda={q^{n}-1\over q-1}-{q^{n-1}-1\over q-1}=q^{n-1}$ is obviously coprime with $v={q^{n+1}-1\over q-1}$.
Assume that $q=p^\alpha$ with $p$ prime and
note that we have $$p^{\alpha(n+1)}=q^{n+1}=(q-1)v+1\equiv1 \quad (mod \ v)$$ 
This means that $ord_v(p)=\alpha(n+1)$. Thus, applying Theorem \ref{wow}
we get the following.

\begin{cor}
The point-hyperplane design of PG$(n,q)$ is $EA({q^{n+1}})$-additive.
In particular, the desarguesian projective plane of order $q$ is $EA({q^3})$-additive.
\end{cor}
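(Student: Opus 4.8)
The plan is to realise the point-hyperplane design of PG$(n,q)$ as a cyclic symmetric design and then invoke Theorem \ref{wow}. As recalled just above, this incidence structure is generated by the Singer difference set, so it is a cyclic symmetric $(v,k,\lambda)$ design with $v=(q^{n+1}-1)/(q-1)$, $k=(q^n-1)/(q-1)$ and $\lambda=(q^{n-1}-1)/(q-1)$. Writing $q=p^\alpha$ with $p$ prime, the whole argument reduces to verifying the hypotheses of Theorem \ref{wow} for the prime $p$ and then evaluating the resulting exponent $ord_v(p)$.

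First I would compute $k-\lambda$. A direct simplification gives $k-\lambda=(q^n-q^{n-1})/(q-1)=q^{n-1}$, which for $n\geq 2$ is a positive power of $p$; hence $p$ divides $k-\lambda$. On the other hand $v\equiv 1\pmod q$ because $v=1+q+\dots+q^n$, so $\gcd(q,v)=1$ and in particular $p\nmid v$. Thus $p$ is a prime dividing $k-\lambda$ but not $v$, which is exactly the hypothesis of Theorem \ref{wow}, and that theorem therefore yields that the design is EA$(p^{ord_v(p)})$-additive.

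It then remains to identify $ord_v(p)$ with $\alpha(n+1)$, so that $p^{ord_v(p)}=q^{n+1}$ and the group becomes EA$(q^{n+1})$. The upper bound is immediate: since $q^{n+1}=(q-1)v+1\equiv 1\pmod v$, we have $p^{\alpha(n+1)}=q^{n+1}\equiv 1\pmod v$, whence $ord_v(p)$ divides $\alpha(n+1)$. The delicate point, and the step I expect to be the main obstacle, is the matching lower bound, i.e.\ ruling out that $ord_v(p)$ is a proper divisor of $\alpha(n+1)$. Here I would argue by size: writing $d:=ord_v(p)$, the divisibility $v\mid p^d-1$ forces $p^d\geq v+1>q^n=p^{\alpha n}$, so $d>\alpha n$; but every proper divisor of $\alpha(n+1)$ is at most $\alpha(n+1)/2\leq\alpha n$ for $n\geq 1$, so $d$ cannot be a proper divisor and must equal $\alpha(n+1)$.

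Putting the pieces together, Theorem \ref{wow} gives the EA$(q^{n+1})$-additivity of the point-hyperplane design of PG$(n,q)$, and specialising to $n=2$, where this design is the desarguesian projective plane of order $q$, yields EA$(q^3)$-additivity. I do not anticipate any difficulty beyond the order computation: once the Singer parameters and the identity $k-\lambda=q^{n-1}$ are in hand, the reduction to Theorem \ref{wow} is routine, and the only genuinely quantitative ingredient is the size estimate confirming that $ord_v(p)$ attains the full value $\alpha(n+1)$ rather than a proper divisor of it.
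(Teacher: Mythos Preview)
Your proof is correct and follows exactly the paper's approach: identify the Singer parameters, check that $p\mid k-\lambda=q^{n-1}$ while $p\nmid v$, determine $ord_v(p)$, and invoke Theorem~\ref{wow}. You are in fact more careful than the paper, which simply asserts $ord_v(p)=\alpha(n+1)$ from $p^{\alpha(n+1)}\equiv 1\pmod v$ without addressing the lower bound; your size estimate $p^d\geq v+1>q^n$ fills that gap (though strictly speaking the corollary as stated only needs $ord_v(p)\mid\alpha(n+1)$, since EA$(p^t)$-additivity for a divisor $t$ of $\alpha(n+1)$ already yields EA$(q^{n+1})$-additivity via the inclusion EA$(p^t)\hookrightarrow$ EA$(q^{n+1})$).
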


\begin{ex}
Consider the Singer $(13,4,1)$ difference set $D=\{0,1,3,9\}$ generating PG$(2,3)$, the projective plane of order 3. 
Let $r$ be a root of the primitive polynomial $x^3+2x^2+1$ over $\F_3$. Taking $r$ as primitive element of $\F_{3^3}$,
a generator of the subgroup of $\F_{3^3}^*$ of order 13 is $g=r^2$. Let us calculate the two sums $\sigma_1$ and $\sigma_{-1}$.
$$\sigma_1=g^0+g^1+g^3+g^9=r^0+r^2+r^6+r^{18}=$$
$$=(0,0,1)+(1,0,0)+(2,2,0)+(0,1,1)=(0,0,2);$$

$$\sigma_{-1}=g^0+g^{-1}+g^{-3}+g^{-9}=r^0+r^{-2}+r^{-6}+r^{-18}=$$
$$=(0,0,1)+(0,2,1)+(2,0,2)+(1,1,2)=(0,0,0).$$

Thus a smooth embedding of PG$(2,3)$ in $\Z_3^3$ is given by 
the map $$f^{-1}: x\in \Z_{13} \longrightarrow g^{-x}\in \F_{3^3}$$
In other words
PG$(2,3)$ can be seen as the design $(V,\mathscr{B})$ where 
$$V=\{001,100,122,220,112,121,120,020,201,011,202,111,021\}$$
and where $\mathscr{B}$ consists of the following zero-sum blocks
$$\{001,021,202,112\},\quad\{021,111,011,220\},\quad\{111,202,201,122\},$$
$$\{202,011,020,100\},\quad\{011,201,120,001\},\quad\{201,020,121,021\},$$
$$\{020,120,112,111\},\quad\{120,121,220,202\},\quad\{121,112,122,011\},$$
$$\{112,220,100,201\},\quad\{220,122,001,020\},\quad\{122,100,021,120\},$$
$$\{100,001,111,121\}.$$
\end{ex}

Another important class of symmetric designs is that of Paley. For any given prime 
$v=4\lambda+3$, the set $\Z_v^\Box$ of non-zero squares of $\F_v$ is the
so-called Paley $(4\lambda+3,2\lambda+1,\lambda)$ difference set. 
Let Paley$(v)$ be its associated symmetric design. 
Here $k-\lambda=(2\lambda+1)-\lambda=\lambda+1$ is clearly coprime with $v=4\lambda+3$. 
Hence, applying Theorem \ref{wow} we get the following.

\begin{cor}
Let $v=4\lambda+3$ be a prime, let $p$ be any prime divisor of $\lambda+1$, and let $t=ord_{v}(p)$. 
Then Paley$(v)$ is additive under $\Z_p^{t}$.
\end{cor}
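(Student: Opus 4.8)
The plan is to recognize the statement as a direct specialization of Theorem \ref{wow} to the Paley family, so that the whole proof reduces to checking that the hypotheses of that theorem are met. First I would recall the standard fact that, for a prime $v=4\lambda+3$, the set $\Z_v^\Box$ of nonzero squares of $\F_v$ is a $(4\lambda+3,2\lambda+1,\lambda)$ difference set in the cyclic group $\Z_v$; consequently its development is a cyclic symmetric $(v,k,\lambda)$ design with $k=2\lambda+1$, namely Paley$(v)$. This places us squarely in the setting of Theorem \ref{wow}, whose conclusion I intend to invoke.

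Next I would compute the relevant quantity $k-\lambda=(2\lambda+1)-\lambda=\lambda+1$ and verify the two arithmetic conditions required by Theorem \ref{wow} for a chosen prime $p$: that $p$ divides $k-\lambda$, and that $p$ does not divide $v$. The first holds by hypothesis, since $p$ is taken to be a prime divisor of $\lambda+1=k-\lambda$. For the second I would show that $\lambda+1$ is coprime with $v$, using the one-line Euclidean reduction $\gcd(\lambda+1,4\lambda+3)=\gcd(\lambda+1,(4\lambda+3)-4(\lambda+1))=\gcd(\lambda+1,-1)=1$. Hence no prime dividing $\lambda+1$ can divide $v$, and in particular $p\nmid v$.

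With both hypotheses verified, Theorem \ref{wow} applies and yields that Paley$(v)$ is $EA(p^t)$-additive with $t=ord_v(p)$; identifying the elementary abelian group $EA(p^t)$ with $\Z_p^t$ gives exactly the claimed $\Z_p^t$-additivity. I do not expect any genuine obstacle here: the substantive work lives entirely in Theorem \ref{wow}, which we are free to assume, and the corollary is little more than a verification of its hypotheses. The only point deserving a moment's care is the coprimality of $k-\lambda$ with $v$, but this is immediate from the displayed gcd computation, which moreover guarantees that \emph{every} prime divisor of $\lambda+1$ is an admissible choice of $p$.
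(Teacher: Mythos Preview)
Your proposal is correct and follows exactly the paper's approach: recall that Paley$(v)$ is the cyclic symmetric design arising from the Paley $(4\lambda+3,2\lambda+1,\lambda)$ difference set, compute $k-\lambda=\lambda+1$, observe it is coprime with $v$, and apply Theorem~\ref{wow}. If anything, your explicit gcd computation is more detailed than the paper, which simply asserts that $\lambda+1$ is ``clearly coprime'' with $v=4\lambda+3$.
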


Note, in particular, that if $v=4\lambda+3$ is a Mersenne prime, say $v=2^t-1$, then
2 divides $k-\lambda$ but not $v$ and $ord_v(2)=t$.
Thus Theorem \ref{wow} allows to state the following.

\begin{cor}
If $v=2^t-1$ is a Mersenne prime, then Paley$(v)$ is almost strictly $\Z_{2}^t$-additive.
\end{cor}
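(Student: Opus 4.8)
The plan is to obtain this as the $p=2$ specialization of Theorem~\ref{wow}, together with a single order computation that pins down the exponent of the group and reveals that its order is exactly $v+1$. So the whole argument reduces to checking the hypotheses of Theorem~\ref{wow} for the prime $2$ and then reading off $ord_v(2)$.

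First I would verify that $v=2^t-1$ really is a Paley modulus and that $2$ is a legitimate choice of prime. For $t\geq 2$ one has $2^t\equiv 0\pmod 4$, hence $v=2^t-1\equiv 3\pmod 4$; writing $v=4\lambda+3$ gives $\lambda=2^{t-2}-1$, so the Paley $(4\lambda+3,2\lambda+1,\lambda)$ difference set $\Z_v^{\Box}$ has $k-\lambda=\lambda+1=2^{t-2}$. Thus $2$ divides $k-\lambda$ (for $t\geq 3$), while $v$ is odd and so $2\nmid v$; these are precisely the conditions required to apply Theorem~\ref{wow} with $p=2$. The one genuinely substantive step is the computation of $t'=ord_v(2)$: by definition $t'$ is the least positive integer with $v\mid 2^{t'}-1$, i.e.\ $2^t-1\mid 2^{t'}-1$, which forces $2^{t'}-1\geq 2^t-1$ and hence $t'\geq t$; since $t'=t$ manifestly works, we conclude $ord_v(2)=t$ exactly.

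Feeding $p=2$ and $ord_v(2)=t$ into Theorem~\ref{wow} then yields that Paley$(v)$ is $EA(2^t)$-additive, that is $\Z_2^t$-additive. Finally, since $|\Z_2^t|=2^t=v+1$, this is additivity under a group of order $v+1$, which by the definition given in the introduction is exactly what it means for Paley$(v)$ to be almost strictly $\Z_2^t$-additive. I expect no real obstacle: once Theorem~\ref{wow} is in hand the corollary is essentially formal, and the only non-bookkeeping ingredient is the elementary identity $ord_{2^t-1}(2)=t$, whose proof I have sketched above.
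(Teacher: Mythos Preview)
Your proposal is correct and follows exactly the paper's approach: the paper derives the corollary from Theorem~\ref{wow} by the very observations you spell out, namely that $2\mid k-\lambda=\lambda+1=2^{t-2}$, that $2\nmid v$, that $ord_v(2)=t$, and that $|\Z_2^t|=v+1$. You have simply supplied a few more details (the congruence $v\equiv 3\pmod 4$, the explicit bound $t\geq 3$, and the minimality argument for $ord_v(2)$) than the paper bothers to write down.
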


 \section{Strong additivity of PG$_d(n,q)$}
 
Throughout this section, given a prime power $q$ and a positive integer $n$, we denote by $[n]_q$
the number of points of PG$(n-1,q)$, hence
$$[n]_q={q^{n}-1\over q-1}$$
\begin{thm}
PG$_d(n,q)$ is strongly additive under $\Z_{q^d}^{[n+1]_q}$.
\end{thm}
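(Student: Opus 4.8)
The plan is to mimic the matrix construction of Theorem \ref{strongsymmetric}, but with the role of the blocks taken over by the \emph{hyperplanes} of PG$(n,q)$ and the role of the modulus $k-\lambda$ taken over by $q^d$. The crucial numerical coincidence is that PG$(n,q)$ has exactly $[n+1]_q$ hyperplanes, the same as its number of points, so the group $G=\Z_{q^d}^{[n+1]_q}$ has precisely one coordinate available per hyperplane. I would index the coordinates of $G$ by the hyperplanes of PG$(n,q)$ and define $f$ on a point $x$ by setting the $H$-coordinate of $f(x)$ equal to $0$ if $x\in H$ and to $1$ if $x\notin H$; equivalently, $f(x)$ is the row indexed by $x$ of the complement of the point--hyperplane incidence matrix, read in $\Z_{q^d}$. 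Injectivity is immediate, since two distinct points of PG$(n,q)$ are always separated by some hyperplane, so their rows differ.

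For the ``only if'' direction I would invoke the elementary incidence fact that, for a $d$-dimensional subspace $B$ and a hyperplane $H$, either $B\subseteq H$ or $B\cap H$ is a hyperplane of $B$. In the first case $|B\setminus H|=0$, and in the second $|B\setminus H|=[d+1]_q-[d]_q=q^d$. In both cases the $H$-coordinate of $\sum_{x\in B}f(x)$ equals $|B\setminus H|\in\{0,q^d\}$, which vanishes in $\Z_{q^d}$; hence every block has zero-sum image.

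The substance of the theorem is the converse, and this is where I expect the main work to lie. Suppose $\beta$ is a $[d+1]_q$-subset of the points with $f(\beta)$ zero-sum; then $|\beta\setminus H|\equiv 0\pmod{q^d}$ for every hyperplane $H$. The key observation is that, since $[d+1]_q=q^d+[d]_q$ with $[d]_q<q^d$, one has $|\beta|<2q^d$, and this crude bound collapses the congruence into the sharp two-valued condition $|\beta\setminus H|\in\{0,q^d\}$ for every $H$. I would then pin down the span of $\beta$ by a single double count: on one hand $\sum_H|\beta\setminus H|=\sum_{x\in\beta}\#\{H : x\notin H\}=[d+1]_q\,q^n$, because each point lies off exactly $q^n$ of the hyperplanes; on the other hand, writing $N$ for the number of hyperplanes containing $\beta$, the dichotomy gives $\sum_H|\beta\setminus H|=([n+1]_q-N)\,q^d$. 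Equating the two expressions yields $N=[n-d]_q$.

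To finish, note that $N$ equals the number of hyperplanes through the span $\langle\beta\rangle$, which for a subspace of projective dimension $m$ is $[n-m]_q$; since $[\,\cdot\,]_q$ is strictly increasing, $N=[n-d]_q$ forces $m=d$. Thus $\langle\beta\rangle$ is a $d$-subspace of size $[d+1]_q=|\beta|$ containing $\beta$, so $\beta=\langle\beta\rangle$ is a block, and strong additivity follows. The only delicate points I anticipate are the bookkeeping facts that each point is off exactly $q^n$ hyperplanes and that a projective $m$-space lies in exactly $[n-m]_q$ hyperplanes (both via counting or duality in PG$(n,q)$), together with the observation that the size bound $|\beta|<2q^d$ is precisely what turns the modular condition into the usable dichotomy on which the whole converse rests.
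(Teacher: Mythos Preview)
Your proof is correct and follows essentially the same strategy as the paper: the identical embedding via the complemented point--hyperplane incidence matrix over $\Z_{q^d}$, the same dichotomy $|\beta\setminus H|\in\{0,q^d\}$ forced by the bound $|\beta|<2q^d$, and the same comparison of the number of hyperplanes through $\beta$ with $[n-d]_q$ to pin down the span. The only difference is cosmetic: your double count of $\sum_H|\beta\setminus H|$ delivers the equality $N=[n-d]_q$ directly, whereas the paper counts $\sum_j|\pi_j\cap S|$ and sandwiches $|J|$ between $[n-d]_q$ and $[n-d']_q$ to reach the same conclusion.
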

\begin{proof}
Set $v=[n+1]_q$ and $k=[d+1]_q$.
Let ${\mathscr  P}=\{x_1,\dots,x_v\}$ be an ordering of the points of  PG$(n,q)$,
and let ${\cal H}=\{\pi_1,\dots,\pi_v\}$ be an ordering of its hyperplanes.

Consider the $v\times v$ matrix $M=(m_{i,j})$ with entries in $\Z_{q^d}$ defined by
$$m_{i,j}=\begin{cases}
0 & {\rm if} \ x_i\in\pi_j\cr
1 & {\rm if} \ x_i\not\in \pi_j
 \end{cases}$$
and consider the map
$$f: x_i \in {\mathscr P} \longrightarrow M_i\in \Z_{q^d}^v$$
where $M_i$ denotes the $i$-th row of $M$.
Given any two distinct points $x_{i_1}$, $x_{i_2}$, we can take a hyperplane
$\pi_j$ containing  $x_{i_1}$ but not $x_{i_2}$ so that the $j$-th component of $f(x_{i_1})$ is 0 whereas 
the $j$-th component of $f(x_{i_2})$ is 1. Hence $f(x_{i_1})\neq f(x_{i_2})$ and $f$ is injective.
The assertion will be proved if we show that the following equivalence holds.
$$\mbox{$S$ is a $d$-subspace of PG$(n,q)$ \quad$\Longleftrightarrow$\quad $S\in{{\mathscr P}\choose k}$ and $f(S)$ is zero-sum.}$$
$(\Longrightarrow).$\quad Let $S=\{x_{i_1},\dots,x_{i_k}\}$ be a $d$-subsapce of PG$(n,q)$. 
For any hyperplane $\pi_j$ of $\cal{H}$
we have either $S\subset \pi_j$ or $|S \cap \pi_j|=[d]_q$. In the first case
we have $m_{{i_h},j}=0$ for $1\leq h\leq k$ and then $m_{i_1,j}+m_{i_2,j}+\dots+m_{i_k,j}=0$.
In the second case, up to a reordering of the indices we can assume that $S\cap\pi_j=\{x_{i_1},\dots,x_{i_{[d]_q}}\}$ so that we have
$$m_{i_h,j}=\begin{cases}
0 & {\rm for} \ 1\leq h\leq [d]_q\medskip\cr
1 & {\rm for} \ [d]_q+1\leq h\leq k
 \end{cases}$$
 We get again that $m_{i_1,j}+m_{i_2,j}+\dots+m_{i_k,j}$ is null since it is the sum of $k-[d]_q=[d+1]_q-[d]_q=q^d$ ones
 and the sum has to be performed in $\Z_{q^d}$.
In view of the arbitrariness of $j$,
it follows that $M_{i_1}+M_{i_2}+\dots+M_{i_k}=0$, i.e., $f(S)$ is zero-sum.

$(\Longleftarrow).$\quad Assume that $S=\{x_{i_1},x_{i_2},\dots,x_{i_k}\}$ is a $k$-subset of $\mathscr P$
such that $f(S)$ is zero-sum. Thus we have $M_{i_1}+M_{i_2}+\dots+M_{i_k}=0$ and then
\begin{equation}\label{proiezione}
m_{i_1,j}+m_{i_2,j}+\dots+m_{i_k,j}=0\quad\mbox{in $\Z_{q^d}$ for  $1\leq j\leq v$}
\end{equation}

Consider the set $\Phi$ of all pairs $(x_{i_h},\pi_j)$ with $1\leq h\leq k$ and $\pi_j\ni x_{i_h}$. 
By a very similar counting argument as that used in the proof of Lemma \ref{doublecounting} we get
\begin{equation}\label{doublecounting2}
 k\cdot[n]_q=\sum_{j=1}^v|\pi_j\cap S|
 \end{equation}
 Indeed the number of pairs $(x_{i_h},\pi_j)\in \Phi$ with $h\in\{1,\dots,k\}$ fixed, is equal to the number of hyperplanes
 through $x_{i_h}$, that is $[n]_q$. Thus $\Phi$ has size equal to the left-hand side of (\ref{doublecounting2}). Also, the pairs $(x_{i_h},\pi_j)\in\Phi$ 
 with $j$ fixed, is equal to $|\pi_j\cap S|$ so that $\Phi$ has size equal to the right-hand side of (\ref{doublecounting2}). 
 
Let $J$ be the set of $j$'s for which $\pi_j$ intersects $S$ in more than $[d]_q$ points and set $\overline J=\{1,2,\dots,v\}\setminus J$.

Take any $j$ in $J$. By definition of $J$, up to a reordering of the indices $i_1$, $i_2$, \dots, $i_k$, we can assume that $\{x_{i_1},x_{i_2},\dots,x_{i_{[d]_q+1}}\}$ 
is contained in $\pi_j \ \cap \ S$ so that we have $m_{i_h,j}=0$ for $1\leq h\leq [d]_q+1$.
It follows that $m_{i_1,j}+m_{i_2,j}+\dots+m_{i_k,j}$ is the sum of a number of $1$s which is at most equal to $k-([d]_q+1)=[d+1]_q-[d]_q-1=q^d-1$. 
Considering (\ref{proiezione2}), this number is necessarily zero, i.e., $m_{i_h,j}=0$ for $1\leq h\leq k$. 
We conclude that every point of $S$ is in $\pi_j$, i.e., $S\subset \pi_j$.

Thus the contribute of each $j\in J$ to the sum in (\ref{doublecounting2}) 
is exactly equal to $|S|=k$
whereas the contribute of each $j\in \overline J$ is at most equal to $[d]_q$ by definition of $\overline J$. 
Thus we can write
$$\sum_{j=1}^v|\pi_j\cap S|\leq k|J|+(v-|J|)[d]_q$$
and then, by (\ref{doublecounting2}),
$$k\cdot[n]_q \leq k|J|+(v-|J|)[d]_q$$
which is equivalent to $|J|\geq{k\cdot[n]_q-v[d]_q\over k-[d]_q}$. Now we have
$${k\cdot[n]_q-v[d]_q\over k-[d]_q}=$$
$$=\biggl{(}{q^{d+1}-1\over q-1}\cdot{q^{n}-1\over q-1}-{q^{n+1}-1\over q-1}\cdot{q^{d}-1\over q-1}\biggl{)}\cdot{1\over{q^{d+1}-1\over q-1}-{q^{d}-1\over q-1}}=$$
$$={-q^{d+1}-q^n+q^{n+1}+q^d\over (q-1)^2}\cdot{q-1\over q^{d+1}-q^d}=$$
$$={(q^n-q^d)(q-1)\over(q-1)^2}\cdot{q-1\over q^d(q-1)}={q^{n-d}-1\over q-1}=[n-d]_q$$
so that we can write
\begin{equation}\label{J>=}
|J|\geq [n-d]_q
\end{equation}
Let $S'$ be the subspace of PG$(n,q)$ which is the intersection of all the hyperplanes $\pi_j$ with $j\in J$
and let $d'$ be its dimension. We have already seen that $S$ is contained in every $\pi_j$ with $j\in J$ so that $S$ is also contained in $S'$.
It follows that $[d+1]_q=|S|\leq|S'|=[d'+1]_q$ and then $d\leq d'$. 

Now let $\Pi(S')$ be the pencil of hyperplanes containing $S'$. 
By the principle of duality, $\Pi(S')$ has size equal to the number
of points belonging to a $(n-d'-1)$-subspace, that is $[n-d']_q$.
By definition of $S'$, we have $\{\pi_j \ | \ j\in J\}\subset \Pi(S')$ so that we have $|J|\leq [n-d']_q$.
This, together with (\ref{J>=}), gives $[n-d]_q\leq [n-d']_q$ and hence $d'\leq d$. 
Anyway we already noted that $d\leq d'$ so that $d=d'$ and then
$|S|=[d]_q=[d'_q]=|S'|$. Considering that $S$ is contained in $S'$ we conclude that $S=S'$, i.e., $S$
is a $d$-subspace of PG$(n,q)$ and the assertion follows.
 \end{proof}
 
Applying the above theorem with $d=n-1$ we get that the point-hyperplane design associated with PG$(n,q)$ 
 is strongly additive under $\Z_{q^{n-1}}^{[n+1]_q}$. Note that this is exactly the same result obtainable by 
 applying Theorem \ref{strongsymmetric}.
 
 \section{Additivity of subspace designs}

In the previous section we established the strong additivity of PG$_d(n,q)$ under a quite huge group.
Here we show that if we are content with the smooth additivity, it is enough to suitably
embed it in the elementary abelian group of order $q^{n+1}$. This is a special case of the following.

\begin{thm}
Every $2$-$(n,k,\lambda)_q$ design is EA$(q^{n})$-additive.
\end{thm}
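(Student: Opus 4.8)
The plan is to realize the points of PG$(n-1,q)$ inside the field $\F_{q^n}$ and to embed them into the additive group of $\F_{q^n}$, which is precisely EA$(q^n)$, by means of a power map. I would identify $\F_{q^n}$ with an $n$-dimensional vector space over $\F_q$, so that the points of PG$(n-1,q)$ are the $1$-dimensional $\F_q$-subspaces $\langle\alpha\rangle$ (equivalently the cosets $\alpha\F_q^*$ of $\F_q^*$ in the cyclic group $\F_{q^n}^*$), while a block is the set of points lying in a fixed $k$-dimensional $\F_q$-subspace $W$ of $\F_{q^n}$. I would then define
$$f\colon\ \langle\alpha\rangle\ \longmapsto\ \alpha^{q-1}\in\F_{q^n}.$$
This is well defined, since $(c\alpha)^{q-1}=c^{q-1}\alpha^{q-1}=\alpha^{q-1}$ for every $c\in\F_q^*$, so the value is independent of the chosen representative; for $q=2$ it reduces to the identity embedding already used for binary subspace designs.

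First I would verify injectivity: if $\alpha^{q-1}=\beta^{q-1}$ then $(\alpha\beta^{-1})^{q-1}=1$, and since $\F_{q^n}^*$ is cyclic it has a unique subgroup of order $q-1$, namely $\F_q^*$; hence $\alpha\beta^{-1}\in\F_q^*$ and $\langle\alpha\rangle=\langle\beta\rangle$. Thus $f$ maps the $[n]_q$ points injectively into $\F_{q^n}$. The core of the argument is then the zero-sum condition. Fixing a block, i.e.\ a $k$-dimensional subspace $W$ with $k\geq 2$, each point $\langle\alpha\rangle\subset W$ accounts for the $q-1$ nonzero vectors $c\alpha$ ($c\in\F_q^*$), all satisfying $(c\alpha)^{q-1}=\alpha^{q-1}$, so summing over the nonzero vectors of $W$ gives
$$\sum_{v\in W\setminus\{0\}}v^{q-1}=(q-1)\sum_{\langle\alpha\rangle\subset W}\alpha^{q-1}=(q-1)\sum_{b\in f(B)}b.$$
Because $q-1\neq 0$ in $\F_{q^n}$, it suffices to prove $\sum_{v\in W\setminus\{0\}}v^{q-1}=0$.

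The key computation is a power-sum evaluation over the subspace $W$. Writing $W=W'\oplus\langle w\rangle$ with $\dim W'=k-1$ and expanding $(u+tw)^{q-1}$ by the binomial theorem, the inner sums $\sum_{t\in\F_q}t^{j}$ vanish for all $j$ except $j=q-1$, where the sum equals $-1$; hence $\sum_{t\in\F_q}(u+tw)^{q-1}=-w^{q-1}$ for every $u\in W'$. Summing over the $q^{k-1}$ vectors $u\in W'$ then yields
$$\sum_{v\in W}v^{q-1}=-q^{k-1}\,w^{q-1}=0,$$
since $q^{k-1}\equiv 0$ in characteristic $p$ as soon as $k\geq 2$. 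Therefore $f(B)$ is zero-sum for every block $B$, and $f$ is the required embedding into EA$(q^n)$.

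I expect the only delicate point to be this power-sum vanishing: it is exactly where the hypothesis $k\geq 2$ — forced for any genuine subspace $2$-design — enters, through the divisibility of $q^{k-1}$ by the characteristic. Everything else (well-definedness, injectivity, and the passage from the sum over vectors to the sum over points) is routine. I would also remark that the construction never uses the full $2$-design structure, only that each block is a $k$-subspace with $k\geq 2$; in particular it applies verbatim to the complete design PG$_d(n,q)$, recovering the smooth-additivity half of the previous section as a special case.
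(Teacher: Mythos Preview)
Your proof is correct and uses the same embedding $f:\overline{\alpha}\mapsto\alpha^{q-1}$ and the same key ingredient (the power-sum identity $\sum_{t\in\F_q}t^{j}=0$ for $0\le j\le q-2$ and $-1$ for $j=q-1$) as the paper. The only difference is in how the zero-sum property for a general subspace is obtained: the paper first proves it for lines via the binomial expansion and then bootstraps to an arbitrary subspace $S$ by summing over the pencil of lines through a fixed point of $S$ (using that $|\mathscr{L}|-1$ is divisible by $q$), whereas you sum directly over all vectors of $W$, pick up the factor $q^{k-1}$ from $|W'|$, and divide by the unit $q-1\in\F_{q^n}^*$ at the end. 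Your route is slightly more direct and handles all $k\ge2$ in one stroke; the paper's route is a bit more geometric. Either way the substance is the same.
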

\begin{proof}
Let us take, as it is standard, $\F_{q^{n}}^*/\F_q^*$ as point set of PG$(n-1,q)$.
Note that two points $\overline x$ and $\overline y$ coincide if and only if ${x\over y}\in\F_q^*$, which is equivalent to saying that $({x\over y})^{q-1}=1$
since $\F_q^*$ is clearly the group of $(q-1)$-th roots of unity of $\F_{q^{n}}$. Thus we have:
$$\overline x=\overline y \Longleftrightarrow x^{q-1}=y^{q-1}.$$ 
This means that the map
$$f: \overline x \in \F_{q^{n}}^*/\F_q^* \longrightarrow x^{q-1}\in \F_{q^{n}}$$
is well-defined and injective.

Recalling that every block of a $2$-$(n,k,\lambda)_q$ design is a subspace of PG$(n-1,q)$, 
the assertion will be proved if we show that we have
\begin{equation}\label{zerosum}
\displaystyle\sum_{\overline z \in S}f(\overline{z})=0\quad \mbox{for every subspace $S$ of PG$(n-1,q)$}
\end{equation}
Given any two distinct points $\overline x$ and $\overline y$ of a line $L$ we have
\begin{equation}\label{VP}
L\setminus\{\overline x\}=\{\overline{\lambda x+y} \ | \ \lambda\in\F_q\}
\end{equation}
Using the binomial Newton's formula we can write:
\begin{equation}\label{VP2}
\begin{array}{l}
\displaystyle\sum_{\lambda\in\F_q}(\lambda x+y)^{q-1}=\sum_{\lambda\in\F_q}\sum_{i=0}^{q-1}{q-1\choose i}\lambda^ix^iy^{q-1-i}=\medskip\\
=\displaystyle\sum_{i=0}^{q-1}{q-1\choose i}\mu_ix^iy^{q-1-i}\\
\end{array}
\end{equation}
where $\mu_i=\displaystyle\sum_{\lambda\in\F_q}\lambda^i$.
It is well-known (see, e.g.,  the lemma on page 5 of \cite{Serre}) that we have:
$$\sum_{\lambda\in\F_q}\lambda^i=\begin{cases}
0 & {\rm for} \ 0\leq i\leq q-2;\cr
-1 & {\rm for} \ i=q-1
 \end{cases}$$
 It follows that the only non-null addend in the last sum of (\ref{VP2}) is the one corresponding to $i=q-1$, that is $-x^{q-1}$. Then, considering (\ref{VP}),
 we have $\displaystyle\sum_{\overline z \in L}f(\overline{z})=0$. This proves that (\ref{zerosum}) holds for
 1-dimensional subspaces. Now take any $d$-dimensional subspace $S$ of PG$(n-1,q)$, take a point $\overline x$ of $S$ and 
 let $\mathscr{L}$ be the set of lines of $S$ through $\overline x$. 
Consider the double sum 
 $$\sigma=\sum_{L\in\mathscr{L}}\sum_{\overline z\in L}f(\overline z)$$
 Given that (\ref{zerosum}) has been proved for the lines, $\sum_{\overline z\in L}f(\overline z)$ is null for every $L\in\mathscr{L}$
and hence $\sigma$ is null as well. Also note that in the expansion of $\sigma$ the addend $f(\overline y)$ appears exactly once
for every $\overline y\in S\setminus\{\overline x\}$ whereas the addend $f(\overline x)$ appears exactly $|\mathscr{L}|$ times.
 Thus we can write
 $$\sigma=0=\sum_{\overline z\in S}f(\overline z)+(|\mathscr{L}|-1)f(\overline x)$$
 Now note that $\mathscr{L}$ has size ${q^d-1\over q-1}$ so that $|\mathscr{L}|-1=q\cdot{q^{d-1}-1\over q-1}$ which is zero in $\F_q$.
We conclude that $\sum_{\overline z\in S} f(\overline z)=0$, hence (\ref{zerosum}) is completely proved and the assertion follows.
 \end{proof}
 
As a consequence, recalling that PG$_d(n,q)$ is a 2-$(n+1,d+1,{n-1 \brack d-1}_q)_q$ design, we can state the following.
 
 \begin{cor}
PG$_d(n,q)$ is EA$(q^{n+1})$-additive.
\end{cor}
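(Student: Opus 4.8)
The plan is to observe that this corollary is a direct specialization of the preceding theorem, so the only genuine task is to correctly identify the parameters of PG$_d(n,q)$ as a subspace design and then invoke that result with the appropriate index shift.

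First I would recall from the introduction that PG$_d(n,q)$ -- the design of points and $d$-dimensional subspaces of PG$(n,q)$ -- is a bona fide $q$-analog of a classical $2$-design. Its point set consists of the $[n+1]_q=(q^{n+1}-1)/(q-1)$ points of PG$(n,q)$, and its blocks are the $d$-dimensional subspaces of PG$(n,q)$. In the notation of the theorem, where the blocks of a $2$-$(n',k,\lambda)_q$ design are the $(k-1)$-dimensional subspaces of PG$(n'-1,q)$, this means PG$_d(n,q)$ is precisely a $2$-$(n+1,d+1,\lambda)_q$ design, with $\lambda={n-1 \brack d-1}_q$ counting the $d$-subspaces through a fixed pair of distinct points.

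Then I would apply the theorem verbatim, taking its running dimension parameter to be $n+1$ (rather than $n$) and its block parameter $k$ to be $d+1$. The theorem guarantees that every $2$-$(n',k,\lambda)_q$ design is EA$(q^{n'})$-additive; specializing to $n'=n+1$ yields EA$(q^{n+1})$-additivity of PG$_d(n,q)$, which is exactly the claim. Concretely, the embedding $f:\overline x\mapsto x^{q-1}$ built in the proof of the theorem already sends \emph{every} subspace of PG$(n,q)$ -- and in particular every $d$-subspace, i.e.\ every block of PG$_d(n,q)$ -- to a zero-sum subset of $\F_{q^{n+1}}$, so no additional argument is needed. I expect no real obstacle here beyond the bookkeeping of the index shift from $n$ to $n+1$: all the combinatorial and field-theoretic content was already discharged in establishing the general theorem.
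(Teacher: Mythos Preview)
Your proposal is correct and matches the paper's own approach exactly: the paper simply notes that PG$_d(n,q)$ is a $2$-$(n+1,d+1,{n-1 \brack d-1}_q)_q$ design and then states the corollary as an immediate consequence of the preceding theorem with the dimension parameter shifted from $n$ to $n+1$. No further argument is given or needed.
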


\begin{ex} Let us give an additive representation of PG$_1(3,3)$.
Let $g$ be a root of the primitive polynomial $x^4+x+2$ over $\mathbb{F}_{3}$. 
A standard presentation of PG$_1(3,3)$ is the following:

point set $V=\mathbb{F}_{3^4}^*/\mathbb{F}_{3}^*$; 

block set $\mathscr{B}=\{B_i\cdot g^j \ | \ 1\leq i\leq 3; 0\leq j\leq 39\} \ \cup \ \{B_4\cdot g^j \ | \ 0\leq j\leq 9\}$ 
where 
$$B_1=\{g^0,g^1,g^4,g^{13}\};\quad B_2=\{g^0,g^2,g^{17},g^{24}\};$$ 
$$B_3=\{g^0,g^5,g^{26},g^{34}\};\quad B_4=\{g^0,g^{10},g^{20},g^{30}\}.$$
The map 
$$f: g^i \in \mathbb{F}_{3^4}^*/\mathbb{F}_{3}^* \longrightarrow g^{2i}\in \F_{3^4}$$
turns our design $(V,{\mathscr B})$ into the isomorphic design $(f(V),f({\mathscr B}))$ where the point set $f(V)$
is the set of non-zero squares of $\F_{3^4}$ and where each block $f(B)$ is zero-sum. For instance we
have:
$$f(B_1)=\{g^0,g^2,g^8,g^{26}\}=\{(0,0,0,1),(0,1,0,0),(0,1,1,1),(0,1,2,1)\};$$
$$f(B_2)=\{g^0,g^4,g^{34},g^{48}\}=\{(0,0,0,1),(0,0,2,1),(0,1,2,2),(0,2,2,2)\};$$
$$f(B_3)=\{g^0,g^{10},g^{52},g^{68}\}=\{(0,0,0,1),(1,1,2,1),(1,0,0,2),(1,2,1,2)\};$$
$$f(B_4)=\{g^0,g^{20},g^{40},g^{60}\}=\{(0,0,0,1),(2,2,1,0),(0,0,0,2),(1,1,2,0)\}.$$

\end{ex}

Unfortunately, we are not able to answer the question on whether there are strongly additive subspace 2-designs which 
are not complete. We think that investigating this question 
is much worth of attention since, maybe, it could give some answers also on the very hard existence problem for non-complete designs over a finite field.

\section*{Acknowledgements}
This work has been performed under the auspices of the G.N.S.A.G.A. of the C.N.R. (National Research Council) of Italy.

The second author is supported in part by the Croatian Science Foundation
under the project 9752.

 \bibliographystyle{model1-num-names}

\end{document}